\documentclass{article}
\usepackage{amsfonts}
\usepackage{amssymb}
\usepackage{mathrsfs}
\usepackage{amsthm}
\usepackage{latexsym, amsfonts, amsmath, amssymb, amscd, graphicx}
\usepackage{hyperref}
\newtheorem{theorem}{Theorem}[section]
\newtheorem{definition}[theorem]{Definition}
\newtheorem{lemma}[theorem]{Lemma}

\newtheorem{corollary}[theorem]{Corollary}

\title{\bf On the $n$-dimensional Blaschke-Lebesgue Problem
 }

\author{\\
\\{\bf Botao \ Duan$^{1,2}$, \quad Deyan\ Zhang$^{1,}$\thanks{The corresponding author, email: zhangdy8005@126.com }}
\\\
1. School of Mathematics and Statistics,\ \ HuaiBei Normal University, \\
Huaibei, 235025, P. R. China.\\
2. Nanjing University of Finance, Economics Hongshan College,
\\
Nanjing, 211300, P. R. China, \  email: duanbotao199202@163.com
}
\date{}
\begin{document}
	
\maketitle
	
{\bf Abstract} The classical Blaschke-Lebesgue problem seeks
the convex body of minimal volume among all convex bodies of prescribed constant width.
In this paper,
we investigate the necessary conditions for volume minimizers of the $n$-dimensional
Blaschke-Lebesgue problem via the inverse Gauss map and the
inverse Weingarten transformation.
We first introduce a generalized support function for hypersurfaces induced by the inverse Gauss map,
characterize the Wigner caustic of constant width bodies,
and accordingly construct convex bodies of constant width
from prescribed functions. Building upon there results and applying the Brunn-Minkowski inequality,
we establish a high-dimensional necessary condition: for almost every antipodal pair of
 a volume minimizer, one point is
non-regular; at the other point, the minimum principal curvatures equals the reciprocal of the constant width.
As an application, we provide a new proof for the two-dimensional Blaschke-Lebesgue theorem.
Furthermore, we study the dimension-raising construction of Lachand-Robert and Oudet,
and prove that every convex body produced by this construction satisfies our necessary condition.
Finally, we put forward a conjecture stating that the sequence obtained by iterating this construction
starting from a line segment yields minimizers for the Blaschke-Lebesgue problem for all dimensions.
\\\\
{\bf Key words}\, bodies of constant width;\, $n$-dimensional Blaschke-Lebesgue Problem;\, inverse Gauss map;\, inverse Weingarten transformation;\,Brunn-Minkowski inequality
\\\\
	{\bf 2000 MR Mathematics Subject Classification:} {52A20, 52A40}

\baselineskip 15pt
	
\section{Introduction}
\setcounter{equation}{0}
	
 Let $\mathbb{R}^n$ denote the $n$-dimensional Euclidean space
 equipped with the standard inner product $\langle \cdot, \cdot \rangle$,
 let $S^{n-1}$ stand for the $(n-1)$-dimensional unit sphere
in $\mathbb{R}^n$.
Given a convex body $K\subset\mathbb{R}^n$, the width of $K$ in a direction
$u \in S^{n-1}$ is defined as the Euclidean distance between the two supporting hyperplanes of $K$
 orthogonal to $u$. A convex body is said to be of {\it constant width}
 if its width is the same in every direction.
 The closed ball is the most
 elementary example of a body of constant width.
  In the planar setting,
 Reuleaux constructed a famous constant width set by
 intersecting three closed disks of radius equal to the side length of
 an equilateral triangle, each disk centered at one vertex of the triangle.
 This yields the classical Reuleaux triangle.
 The construction was later generalized to odd-sided polygons, producing planar
 constant width bodies bounded by circular arcs of constant curvature,
 commonly called Reuleaux polygons. Following a related construction
 principle, Meissner constructed
 two distinct types of constant width bodies in $\mathbb{R}^3$ based on a regular tetrahedron;
 these objects are now known as Meissner bodies, see \cite{M-1911, M-1912}.
 In 2007, Lachand-Robert and
 Oudet \cite{LO-2007} introduced a dimension-raising method that produces
 an $n$-dimensional body of constant width from an $(n-1)$-dimensional one;
 this method reveals an intrinsic relation between the Reuleaux triangle and Meissner bodies.

It is well-known that among all $n$-dimensional
 bodies of constant width $\omega > 0$,
 the $n$-dimensional closed ball maximizes the volume.
 Under the same width constraint, the volume of bodies of constant
 width admits a positive infimum. This leads to a celebrated geometric problem:
 among all n-dimensional bodies of constant
 width with prescribed width, which shape attains the minimal volume?
 This is the famous {\it Blaschke-Lebesgue problem}.
 In dimension two, Blaschke \cite{B-1915} and
 Lebesgue \cite{L-1914} independently proved that the Reuleaux triangle
 is the volume minimizer, a result now called the Blaschke-Lebesgue theorem.
 Since then, many authors have provided
 various alternative proofs (see, e.g., references \cite{B-2025, E-1952, F-1927, G-1996, H-2002,
M-2009, YB-1961}). For dimensions $n \geq 3$,
however, the problem remains open to the present day.
In 1934, Bonnesen and Fenchel
\cite{B-1934} conjectured that Meissner bodies are volume minimizers within the class of three-dimensional convex bodies of constant width.
The problem has subsequently been investigated from a variety of analytical and numerical perspectives;
consult \cite{AG-2011,AG-2026, B-2024, B-2025, B-2026,
BH-2012, BL-2007, C-1996, H-2024}. It is worth noting that
Anciaux and Guilfoyle \cite{AG-2011} proved some necessary conditions
on the boundary of extremal bodies in $\mathbb{R}^3$ with respect to
the Blaschke-Lebesgue problem; see Theorem A below.

{\bf Theorem A}(\cite{AG-2011})
Let $B$ be a local minimizer of the Blaschke-Lebesgue problem in $\mathbb{R}^n$,
with constant width $\omega$. Then the smooth parts of its boundary have their smaller
principal curvature constant and equal to $\frac{1}{\omega}$.

In this paper, we investigate the necessary conditions for the $n$-dimensional
Blaschke-Lebesgue problem by means of the inverse Gauss map. We first
introduce the inverse Gauss map and the induced inverse Weingarten
transformation for convex bodies, and analyze their
geometric properties. Subsequently, we extend the support function of
a convex body to a generalized support
function for hypersurfaces determined by the
inverse Gauss map (see Definition \ref{d2.2} below).
Using this generalized support function, we characterize the Wigner caustic
of constant width bodies, and thereby construct convex bodies of constant width
from prescribed functions. Building upon there results and applying the Brunn-Minkowski inequality,
we generalize Theorem A to the $n$-dimensional setting and derive the following
necessary condition for the $n$-dimensional
Blaschke-Lebesgue problem:

{\bf Necessary condition for the $n$-dimensional Blaschke-Lebesgue problem}
Let $K \in \mathcal{K}^n(\omega)$
be a volume minimizer for the $n$-dimensional Blaschke-Lebesgue problem.
Then, for almost every pair of antipodal points of $K$, one point is
non-regular; at the other point, the minimum principal curvatures equals $\frac{1}{\omega}$.

In this paper, a point is said to be non-regular if the inverse Weingarten
transformation fails to have full rank at that point; otherwise,
it is said to be regular. Building upon the necessary condition above and
combining it with results due to Firey \cite{F-1960}, we give
a new proof of the two-dimensional Blaschke-Lebesgue theorem. Furthermore,
we investigate the dimension-raising construction for constant width bodies introduced
by Lachand-Robert and Oudet \cite{LO-2007},
and show that every constant width body produced by
this construction satisfies the necessary condition derived above.
For example, starting from a line segment of length $\omega$
(regarded as a one-dimensional constant width body),
the dimension-raising construction yields the Reuleaux triangle of width $\omega$;
starting from the Reuleaux triangle of width $\omega$,
it yields the Meissner body of the same width. Accordingly,
we conjecture that for $n \geq 3$, the Meissner bodies
and the family of higher-dimensional constant width bodies obtained
 via the dimension-raising construction are minimizers for the Blaschke-Lebesgue problem.

\section{Inverse Gauss Map and Inverse Weingarten Transformation of Convex Bodies}\label{s2}
\setcounter{equation}{0}
Let $\mathcal{K}^n$ denote the collection of all strictly convex bodies in $\mathbb{R}^n$.
For any convex body $K \in \mathcal{K}^n$, we write $\partial K$ for its boundary.
\vskip.3cm
\begin{definition}\label{d2.1} Let $K \in \mathcal{K}^n$. For each unit direction $u \in S^{n-1}$,
define
$$H(K, u)
= \{ p \mid \langle u, p \rangle = \max_{x \in K} \langle u, x \rangle, \; p \in \mathbb{R}^n \}.$$
Then $H(K, u)$ is called the supporting hyperplane of $K$ in the direction $u$.
\end{definition}
\noindent By strict convexity of $K$, the intersection $H(K, u) \cap K$ is a singleton.
Its unique element is referred to as the support point of $K$ in the direction $u$.
Accordingly,
$$\partial K = \bigcup_{u \in S^{n-1}} (H(K, u) \cap K),$$
i.e., $\partial K$ is precisely the set consisting of all support points of $K$.

\vskip.3cm
\begin{definition}\label{d2.2}
Let $K \in \mathcal{K}^n$. A map $G_K^r : S^{n-1} \to \partial K$ is said to be
the {\it inverse Gauss map} of the convex body $K$ if, for every $u \in S^{n-1}$,
$G_K^r(u)$ is the support point of $K$ in the direction $u$.
\end{definition}
\noindent {\it Remark 1.}~
The following properties follow immediately from Definition \ref{d2.2}:
\begin{enumerate}
\item The map $G_K^r$ is continuous and $G_K^r(S^{n-1}) = \partial K$.
\item In general, $G_K^r$ need not be injective. If a hypersurface
  $\Sigma \subset \partial K$ is regular, then for every $p \in \Sigma$, there
  exists a unique $u \in S^{n-1}$ such that $p = G_K^r(u)$, and this $u$ is precisely
  the outward unit normal vector of $\Sigma$ at $p$. Let $V \subset S^{n-1}$ satisfy
  $G_K^r(V) = \Sigma$. Then the restriction $G_K^r|_V : V \to \Sigma$ is bijective.
  Its inverse $(G_K^r|_V)^{-1} : \Sigma \to S^{n-1}$ is the Gauss map of
  the hypersurface $\Sigma$, which we denote by $G_K|_V$.
\end{enumerate}
Recall the definition of the support function $h_K(u)$ of a convex body $K$:
$$
h_K(u) = \max_{x \in K} \langle u, x \rangle.
$$
From the definition of the inverse Gauss map $G_K^r$,
the support function of $K$ admits the equivalent expression
\begin{equation}\label{e2.1}
h_K(u) = \langle G_K^r(u), u \rangle.
\end{equation}

For any $u \in S^{n-1}$, let $T_u S^{n-1}$ stand for the tangent space
of $S^{n-1}$ at $u$. Choose an orthonormal frame $\{u; e_1, e_2, \ldots, e_{n-1}\}$
for $T_u S^{n-1}$. Let $v$ be an arbitrary point in a neighbourhood of $u$,
with local coordinates $(x_1, x_2, \ldots, x_{n-1})$
adapted to the frame $\{u; e_1, e_2, \ldots, e_{n-1}\}$, so that
$v(0, 0, \ldots, 0) = u$.
Taking partial derivatives of formula \eqref{e2.1} with respect to $x_i$ gives
$$
\frac{\partial h_K(v)}{\partial x_i}
= \left\langle \frac{\partial G_K^r(v)}{\partial x_i}, v \right\rangle
+ \left\langle G_K^r(v), \frac{\partial v}{\partial x_i} \right\rangle
  = \left\langle \frac{\partial G_K^r(v)}{\partial x_i}, v \right\rangle
 +\left\langle G_K^r(v), e_i \right\rangle,
  \quad i = 1, \ldots, n-1.
$$
Letting $v \to u$, and recalling that
$G_K^r(v) \in \partial K$ and $dG_K^r(v) = \sum_{i=1}^{n-1} \frac{\partial G_K^r(v)}{\partial x_i} dx_i$
 is tangent to $\partial K$, we have
$$
\left\langle \frac{\partial G_K^r(v)}{\partial x_i}\Big|_{v=u},u \right\rangle = 0.
$$
Consequently,
 $$
 \frac{\partial h_K}{\partial x_i} \Big|_{u} = \langle G_K^r(u), e_i \rangle, \quad i = 1, \ldots, n-1.
 $$
Since $\{e_1, e_2, \ldots, e_{n-1}, u\}$ forms an orthonormal basis of $\mathbb{R}^n$,
the orthogonal decomposition of $G_K^r(u)$ yields
\begin{equation*}
G_K^r(u)
= \sum_{i=1}^{n-1} \langle G_K^r(u), e_i \rangle e_i + \langle G_K^r(u), u \rangle u
= \sum_{i=1}^{n-1} \frac{\partial h_K(u)}{\partial x_i} e_i + h_K(u) u.
\end{equation*}
that is,
\begin{equation}\label{e2.2}
G_K^r(u)
= \sum_{i=1}^{n-1} \frac{\partial h_K(u)}{\partial x_i} e_i + h_K(u) u.
\end{equation}
It may be verified that formula \eqref{e2.2} is
independent of the choice of local orthonormal frame.

Let $K$ be a strictly convex body. Then its support function $h_K(u)$ belongs to
$C^{1,1}(S^{n-1})$, meaning that its first-order derivatives satisfy a Lipschitz condition;
see \cite{H-2006}.
By Rademacher's theorem, $h_K(u)$ is twice differentiable almost everywhere
on $S^{n-1}$. Therefore, for almost every $u\in S^{n-1}$, the inverse Gauss map
$G_K^r$ induces the {\it inverse Weingarten transformation}
$\mathcal{W}_K^r : T_u S^{n-1} \to T_{G_K^r(u)} \partial K$, defined by
$$
\mathcal{W}_K^r(du) = dG_K^r(u).
$$

Assume that $h_K(u)$ is twice differentiable at $u \in S^{n-1}$,
Differentiating $G_K^r(u)$ given by \eqref{e2.2}, we obtain
$$
dG_K^r(u) = \sum_{i=1}^{n-1}\frac{\partial}{\partial x_i}
\left(\sum_{j=1}^{n-1} \frac{\partial h_K(u)}{\partial x_j} e_j + h_K(u) u\right)dx_i.
$$
Employing the spherical connection formulas
$\frac{\partial e_i}{\partial x_i} = -\delta_{ij} u$ and
$\frac{\partial u}{\partial x_i} = e_i$,
expansion yields the component expression
$$
\frac{\partial}{\partial x_i} \left( \sum_{j=1}^{n-1} \frac{\partial h_K(u)}{\partial x_j} e_j + h_K(u) u \right)
= \sum_{j=1}^{n-1} \left( \frac{\partial^2 h_K(u)}{\partial x_i \partial x_j} + \delta_{ij} h_K(u) \right) e_j.
$$
Substituting this identity into the expression for $dG_K^r(u)$, we arrive at
\begin{equation}\label{e2.3}
dG_K^r(u) = \sum_{i,j=1}^{n-1} (h_{ij} + \delta_{ij} h_K(u)) e_j\, dx_i,
\end{equation}
where $h_{ij} = \frac{\partial^2 h_K(u)}{\partial x_i \partial x_j}$.

Let $W_K^r(u)$ be the matrix of the inverse Weingarten transformation
$\mathcal{W}_K^r$ with respect to the orthonormal basis $\{e_1, \ldots, e_{n-1}\}$. Then
$$
dG_K^r(u) = (e_1, e_2, \ldots, e_{n-1}) \, W_K^r(u) \, (dx_1, dx_2, \ldots, dx_{n-1})^T,
$$
where $A^T$ stands for the transpose of a matrix $A$.
A tangent vector $du \in T_u S^{n-1}$ admits the representation
$$
du = (e_1, e_2, \ldots, e_{n-1}) (dx_1, dx_2, \ldots, dx_{n-1})^T.
$$
From formula \eqref{e2.3}, we arrive at
$$
W_K^r(u) = \operatorname{Hess}(h_K(u)) + h_K(u) I,
$$
where $I$ denotes the $(n-1) \times (n-1)$ identity matrix.

If the inverse Weingarten transformation $\mathcal{W}_K^r$ has full rank,
then its matrix $W_K^r(u)$ is invertible, and coincides with the inverse of the usual
Weingarten matrix taken with respect to the orthonormal basis
$\{e_1, \ldots, e_{n-1}\}$ in $T_{G_K^r(u)}\partial K$.
Furthermore, the eigenvectors of
$\mathcal{W}_K^r(u)$
correspond to the principal directions of the boundary hypersurface $\partial K$ at $G_K^r(u)$,
denoted by $\xi_i \in T_u S^{n-1}, \quad i = 1, \ldots, n-1$.
The eigenvalues of $\mathcal{W}_K^r(u)$ are exactly the reciprocals of the principal curvatures
along the associated principal directions, i.e. the principal radii of curvature, written
$\rho_i(K, u), \quad i = 1, \ldots, n-1$.
Both the principal directions and the principal curvatures are independent
of the choice of local orthonormal frame. For the convex hypersurface $\partial K$,
one always has
$\rho_i(K, u) \geq 0$, $ i = 1, \ldots, n-1$;
hence $W_K^r(u)$ is positive semidefinite.

Since the support function $h_K$ is twice differentiable almost everywhere
on $S^{n-1}$, the matrix $W_K^r$ is defined almost everywhere on
$S^{n-1}$. The transformation $\mathcal{W}_K^r$ is the tangent map of the inverse Gauss map
$G_K^r $, and its determinant $\det(W_K^r)$ is the pullback
factor of the volume element from the unit sphere to the boundary of the convex body.
Accordingly, the surface area measure $A$ and the $n$-dimensional volume measure $V$ of
$K$ can be expressed in terms of $\det(W_K^r)$ as
\begin{equation}\label{e2.4}
A(\partial K) = \int_{S^{n-1}} \det(W_K^r) \, du
= \int_{S^{n-1}} \det\left(\operatorname{Hess}(h_K) + h_K I\right) \, du,
\end{equation}
\begin{equation}\label{e2.5}
V(K) = \frac{1}{n} \int_{u \in S^{n-1}} h_K \det(\operatorname{Hess}(h_K) + h_K I) \, du.
\end{equation}

\section{Generalized Support Functions
of Hypersurfaces
and the Wigner Caustic
of Constant Width Bodies
}\label{s3}
\setcounter{equation}{0}

Given any function $h \in C^{1,1}(S^{n-1})$, in analogy with formula \eqref{e2.2},
we construct a mapping $\gamma:S^{n-1}\rightarrow \mathbb{R}^n$ induced by $h$ via
\begin{equation}\label{e3.1}
\gamma(u) = \sum_{i=1}^{n-1} \frac{\partial h}{\partial x_i} e_i + h(u)u, \qquad u \in S^{n-1},
\end{equation}
where $\{x_1, x_2, \ldots, x_{n-1}\}$ are local coordinates on the tangent space $T_u S^{n-1}$
 associated with the orthonormal frame $\{u; e_1, e_2, \ldots, e_{n-1}\}$.
 The map $\gamma(u)$ is independent of the choice of local frame.

 Let $\Gamma = \gamma(S^{n-1})$ be the hypersurface in $\mathbb{R}^n$
 determined by $\gamma$. Since $h \in C^{1,1}(S^{n-1})$, the set $\Gamma$ is closed
 and bounded. Nevertheless, $\Gamma$ need not be a convex hypersurface and may even
possess self-intersections.
Accordingly, $\Gamma$ cannot, in general, be realized as
 the boundary of some convex body. In the present paper, we call the function $h$ appearing in \eqref{e3.1}
 the {\it generalized support function} of the hypersurface $\Gamma$.
 Observe that $h$ still satisfies the identity $h(u) = \langle \gamma(u), u \rangle$.
 In this sense, the generalized support function extends the classical notion of
 the support function from convex bodies to
 hypersurfaces in $\mathbb{R}^n$.

By an argument analogous to the derivation of equation \eqref{e2.3},
one may verify that, if $\gamma$ is twice differentiable at $u$,
then the tangent space $T_{\gamma(u)}\Gamma \subset T_u S^{n-1}$ satisfies the
orthogonality relation $u \perp T_{\gamma(u)} \Gamma$, and the identity
$h(u) = \langle \gamma(u), u \rangle$ continues to hold.
In essence, the map $\gamma : S^{n-1} \to \mathbb{R}^n$
plays the role of an inverse Gauss map induced by $h$,
and we write $G^r(h, u)$ (or $G_{\Gamma}^r(u)$) for $\gamma(u)$.
Here $h(u)$ is the generalized support function of $\Gamma$,
also denoted $h_{\Gamma}(u)$.
In particular, if $K \in \mathcal{K}^n$ is a strictly convex body and
$\partial K = \Gamma$, then $h_K(u) = h_{\Gamma}(u)$.
Moreover, the formulae for the inverse Weingarten matrix and its eigenvalues (the principal radii of curvature)
for convex bodies admits a natural extension to the possibly non-convex hypersurface defined
by a generalized support function.
The resulting generalized inverse Weingarten
matrix and its eigenvalues are denoted, respectively,
by $W_{\Gamma}^r(u)$ and $\rho_i(\Gamma, u), i = 1, \dots, n-1$,
where
\begin{equation}\label{e03.2}
W_{\Gamma}^r(u) = \operatorname{Hess}(h_{\Gamma}(u)) + h_{\Gamma}(u)I.
\end{equation}
Within this general setting, the matrix $W_{\Gamma}^r(u)$
is no longer guaranteed to be positive semidefinite,
and the associated principal radii of curvature
$\rho_i(\Gamma, u), i = 1, \dots, n-1$, may take negative values.

Let $\mathcal{K}^n(\omega)$ stand for the collection of all $n$-dimensional bodies
of constant width $\omega > 0$. It is known from \cite{H-2006}
that every $n$-dimensional body of constant width is a strictly
compact convex set of $\mathbb{R}^n$.
Consequently, for each convex body $K \in \mathcal{K}^n(\omega)$,
the matrix $W_K^r$ is defined almost everywhere on $S^{n-1}$ and is positive semidefinite.

\vskip.3cm
\begin{lemma}\label{l3.1}
\begin{enumerate}
\item Let $K \in \mathcal{K}^n(\omega)$ be an $n$-dimensional body of constant width
$\omega$, let $h_K(u)$ denote its the support function. Then the following statements hold:
\begin{enumerate}
\item[(i)] $h_K \in C^{1,1}(S^{n-1})$ and $h_K(u) + h_K(-u) = \omega$;

\item[(ii)] If $W_K^r$ exists at $u \in S^{n-1}$, then $0 \leq \rho_i(K, u) \leq \omega, i = 1, \dots, n-1$.
\end{enumerate}

\item Let $h \in C^{1,1}(S^{n-1})$ and set $\Gamma = G^r(h, S^{n-1})$. Suppose that
$h(u) + h(-u) = \omega$ for all $u \in S^{n-1}$, and that
$0 \leq \rho_i(\Gamma, u) \leq \omega$ for all $i = 1, \dots, n-1$ whenever
$W_{\Gamma}^r(u)$ exists.
Then there exists a body of constant width $K \in \mathcal{K}^n(\omega)$
such that $\Gamma = \partial K$.
\end{enumerate}
\end{lemma}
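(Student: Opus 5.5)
For part 1(i), I take the $C^{1,1}$ regularity of $h_K$ from the remark in Section \ref{s2}: bodies of constant width are strictly convex \cite{H-2006}, so $h_K\in C^{1,1}(S^{n-1})$. The identity $h_K(u)+h_K(-u)=\omega$ is the definition of constant width. The two supporting hyperplanes $H(K,u)$ and $H(K,-u)$ are at distance $h_K(u)+h_K(-u)$, and this distance equals $\omega$ for every $u$.

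For part 1(ii), I differentiate the identity $h_K(u)+h_K(-u)=\omega$ twice at a point $u$ where $h_K$ is twice differentiable. The antipodal map is an isometry of $S^{n-1}$, so $h_K$ is then also twice differentiable at $-u$. With the frame $\{u;e_i\}$ at $u$ and the frame $\{-u;-e_i\}$ at $-u$, the Hessians satisfy $\operatorname{Hess}h_K(u)+\operatorname{Hess}h_K(-u)=0$. Adding $(h_K(u)+h_K(-u))I=\omega I$ to both sides gives
$$W_K^r(u)+W_K^r(-u)=\omega I .$$
Both matrices are positive semidefinite by the discussion after \eqref{e2.3}. Hence $0\le W_K^r(u)\le \omega I$, so every eigenvalue satisfies $0\le\rho_i(K,u)\le\omega$.

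For part 2, I construct $K$ as the convex body with support function $h$, by showing that $h$ extended $1$-homogeneously to $\mathbb{R}^n$ is convex.

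1. Let $H(x)=|x|\,h(x/|x|)$. Then $H\in C^{1,1}_{loc}(\mathbb{R}^n\setminus\{0\})$.
2. Compute the Euclidean Hessian of $H$ at $u\in S^{n-1}$. In the orthonormal basis $\{e_1,\dots,e_{n-1},u\}$ it is block diagonal, with the $(n-1)\times(n-1)$ block equal to $\operatorname{Hess}h+hI=W_\Gamma^r(u)$ and a zero radial entry. This holds almost everywhere.
3. Because $W_\Gamma^r\ge0$ a.e., $D^2H\ge0$ a.e. Since $H$ is $C^{1,1}$ away from $0$, its distributional Hessian is the a.e. Hessian. A $C^{1,1}$ function with a.e. nonnegative Hessian is convex; one can see this by mollifying, or by checking along segments that $\nabla H$ is monotone. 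Convexity then extends across the origin by positive homogeneity and continuity.
4. So $H$ is sublinear and is the support function of a unique convex body $K$, with $h_K=h$.
5. The width condition $h(u)+h(-u)=\omega$ makes $K$ of constant width $\omega$, so $K\in\mathcal{K}^n(\omega)$.
6. Strict convexity from \cite{H-2006} gives $G_K^r(u)=\nabla H(u)$. Formula \eqref{e2.2} and \eqref{e3.1} show $\nabla H(u)=\gamma(u)$. Therefore $\partial K=G_K^r(S^{n-1})=\gamma(S^{n-1})=\Gamma$.

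The upper bound $\rho_i\le\omega$ is not needed for existence; it is automatically consistent with $W(u)+W(-u)=\omega I$.

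I expect the main obstacle to be step 3, justifying convexity from an almost-everywhere Hessian condition at only $C^{1,1}$ regularity. I would handle it by mollifying $h$ on the sphere and checking that $\operatorname{Hess}h_\epsilon+h_\epsilon I\ge -o(1)I$. A cleaner route is to restrict $H$ to a line $t\mapsto x+tv$ avoiding the origin. The restriction is $C^{1,1}$ in $t$, so its derivative is Lipschitz, and that derivative has a.e. nonnegative derivative (the Hessian quadratic form $\langle D^2H\,v,v\rangle$). It is therefore nondecreasing, which gives convexity along every such line. Lines through the origin are handled by continuity.
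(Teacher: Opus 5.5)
Part 1 follows the paper's argument. Both read $W_K^r(-u)=\omega I-W_K^r(u)$ off $h_K(u)+h_K(-u)=\omega$ and combine it with positive semidefiniteness. Your use of the antipodal isometry and the frame $\{-u;-e_i\}$ identifies the two operators on $u^\perp$ a little more carefully than the paper's appeal to shared principal directions.

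Part 2 takes a genuinely different route. The paper works on the hypersurface side. From $0\le\rho_i(\Gamma,u)\le\omega$ it infers principal curvatures at least $1/\omega$ a.e., declares $\Gamma$ a strictly convex closed hypersurface, and takes $K$ to be the enclosed region, so that $h_K=h_\Gamma=h$. Two steps there are only asserted, not proved: the passage from a.e. curvature information on a parametrized, possibly self-intersecting and non-regular $\Gamma$ to global convexity, and the identification of $h$ with the support function of the enclosed region.

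You work on the support-function side, in four steps:
\begin{itemize}
\item $h$ is a support function if and only if its $1$-homogeneous extension $H$ is convex.
\item For $H\in C^{1,1}_{\mathrm{loc}}(\mathbb{R}^n\setminus\{0\})$ this reduces to $D^2H\ge 0$ a.e., because the distributional Hessian of a $W^{2,\infty}_{\mathrm{loc}}$ function is its pointwise a.e. Hessian.
\item The tangential block of $D^2H(u)$ is exactly $W_\Gamma^r(u)$.
\item $\gamma=\nabla H=G_K^r$ gives $\Gamma=\partial K$ with no separate convexity argument for $\Gamma$.
\end{itemize}
This buys a complete proof that needs no special handling of points with $\rho_i=0$. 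It also shows, as you say, that the hypothesis $\rho_i\le\omega$ is redundant, since it follows from $\rho_i\ge 0$ and $W_\Gamma^r(u)+W_\Gamma^r(-u)=\omega I$. In the paper's route that upper bound is exactly what supplies the curvature lower bound.

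Two small repairs are needed.
\begin{itemize}
\item In 1(i), strict convexity alone only gives $h_K\in C^1$: near a boundary point of zero curvature the support point depends non-Lipschitzly on the normal. The $C^{1,1}$ regularity should come from constant width. For the $1$-homogeneous extension, $h_K(x)+h_K(-x)=\omega|x|$ makes both $h_K$ and $\omega|x|-h_K(x)$ convex. Hence $0\le D^2h_K\le D^2(\omega|x|)$ distributionally away from the origin, and $h_K\in C^{1,1}$. The paper's proof has the same weak justification.
\item In the line-restriction version of step 3, an a.e. statement in $\mathbb{R}^n$ says nothing about a fixed line, since a line is a null set. Use Fubini to get $\langle D^2H\,v,v\rangle\ge 0$ a.e. on almost every line parallel to $v$, then continuity of $H$ for the remaining lines. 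Alternatively, mollify $H$ itself on balls avoiding the origin, which is simpler than mollifying $h$ on the sphere.
\end{itemize}
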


\begin{proof}
(1) From the definition of bodies of constant width together with their strict convexity,
it follows directly that
$h_K \in C^{1,1}(S^{n-1})$ and $h_K(u) + h_K(-u) = \omega$.

Since $W_K^r(u)$ is defined and $\partial K$
is a strictly convex hypersurface, we have
$\rho_i(K, u) \geq 0$ for all $i = 1, \dots, n-1$ and every $u \in S^{n-1}$.
Combining this fact with the constant width condition
$h_K(u) + h_K(-u) = \omega$, we immediately obtain
$$
W_K^r(-u)
= \operatorname{Hess}\bigl(\omega - h_K(u)\bigr)
+ \bigl(\omega - h_K(u)\bigr) I = \omega I - W_K^r(u).
$$
Consequently,
$$
W_K^r(u) + W_K^r(-u) = \omega I.
$$
Because $G_K^r(u)$ and $G_K^r(-u)$ share identical principal directions,
one gets
\begin{equation}\label{e3.2}
\rho_i(K, u) + \rho_i(K, -u) = \omega, \qquad i = 1, \dots, n-1, \quad u \in S^{n-1}.
\end{equation}
Hence assertion (ii) is valid.

(2) Assume that $h \in C^{1,1}(S^{n-1})$ and $h(u) + h(-u) = \omega$
for every $u \in S^{n-1}$. From formula \eqref{e3.1}, it is straightforward to verify that
$\Gamma$ is closed and bounded. Moreover, $W_{\Gamma}^r(u)$ exists almost everywhere on
$S^{n-1}$. Together with the hypothesis $0 \leq \rho_i(\Gamma, u) \leq \omega, i = 1, \dots, n-1$,
this implies that the principal curvatures of $\Gamma$ exist almost everywhere and admit
a positive lower bound. Therefore, $\Gamma$ is a strictly convex
closed bounded hypersurface. Let $K$ denote the compact set enclosed by
$\Gamma$. Then K is a strictly convex body. Hence
$$
h_K(u) = h_{\Gamma}(u) = h(u).
$$
Thus $h_K(u) + h_K(-u) = \omega$, and consequently $K \in \mathcal{K}^n(\omega)$.
\end{proof}

In addition, several geometric properties
of convex bodies of constant width have been studied;
see \cite{LO-2007}.
We collect below a number of facts that will be need for this paper.

\vskip.3cm
\begin{lemma}\label{l3.2}
 Let $K \in \mathcal{K}^n(\omega)$. For any $x \in \partial K$, consider $F_x$,
the set of points in $K$ which are as far as possible from $x \in \partial K$:
$$
F_x := \{x' \in K : |x - x'| = \max_{y \in K} |x - y|\}.
$$
Then for any $x' \in F_x$, we have
$|x - x'| = \omega$
and
$x = G_K^r\!\left(\frac{1}{\omega}(x - x')\right)$.
\end{lemma}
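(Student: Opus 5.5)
Fix $x\in\partial K$ and a farthest point $x'\in F_x$, and set $d=|x-x'|=\max_{y\in K}|x-y|$. The plan is to prove two inequalities, $d\le\omega$ and $d\ge\omega$, and then read off the support-point statement from the equality case of a width computation.

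\emph{Upper bound $d\le\omega$.} For any $y\in K$, write $u=(x-y)/|x-y|$. Then
\[
|x-y|=\langle x-y,u\rangle\le h_K(u)+h_K(-u)=\omega,
\]
by Lemma~\ref{l3.1}(i). Hence the diameter of $K$ is at most $\omega$, and in particular $d\le\omega$.

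\emph{Lower bound $d\ge\omega$.} Since $x\in\partial K$, there is an outer normal $u\in S^{n-1}$ with $x=G_K^r(u)$, so that $\langle x,u\rangle=h_K(u)$. Let $z=G_K^r(-u)$, the support point in direction $-u$; then $\langle z,-u\rangle=h_K(-u)$. This gives
\[
|x-z|\ge\langle x-z,u\rangle=h_K(u)+h_K(-u)=\omega .
\]
Since $d$ is the maximal distance from $x$ to points of $K$, we get $d\ge|x-z|\ge\omega$. Together with the upper bound, $d=\omega$, so $|x-x'|=\omega$ for every $x'\in F_x$.

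\emph{Identifying $x$ as a support point.} Put $v=(x-x')/\omega\in S^{n-1}$. By the definition of $h_K$,
\[
\omega=\langle x-x',v\rangle=\langle x,v\rangle+\langle x',-v\rangle\le h_K(v)+h_K(-v)=\omega .
\]
So equality holds throughout, which forces $\langle x,v\rangle=h_K(v)$ and $\langle x',-v\rangle=h_K(-v)$. Thus $x\in H(K,v)\cap K$, and by strict convexity this intersection is the single point $G_K^r(v)$. Hence $x=G_K^r\bigl(\tfrac1\omega(x-x')\bigr)$, as claimed.

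\emph{Main obstacle.} None of these steps is hard. The only point that needs care is the lower bound: it requires the existence of an outer normal at every boundary point, and this follows from convexity, since there is a supporting hyperplane at $x$.
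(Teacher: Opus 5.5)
The paper gives no proof of this lemma. It is listed, together with Lemma~\ref{l3.3}, among facts quoted from Lachand-Robert and Oudet, so there is no in-paper argument to compare with. Your proof is correct and self-contained:
\begin{itemize}
\item Your upper bound $|x-y|=\langle x-y,u\rangle\le h_K(u)+h_K(-u)=\omega$ shows that the diameter is at most $\omega$.
\item Your lower bound correctly relies on $x\in\partial K$, through a supporting hyperplane at $x$ and a support point in the opposite direction. This hypothesis is genuinely needed: at an interior point the farthest distance can be smaller than $\omega$.
\item Your identification step correctly reads off $\langle x,v\rangle=h_K(v)$ from equality in $\langle x,v\rangle+\langle x',-v\rangle\le h_K(v)+h_K(-v)$. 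Strict convexity then gives uniqueness of the support point.
\item Since Lemma~\ref{l3.1}(i) is just the definition of width in direction $u$, invoking it is not circular.
\end{itemize}

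Your route also gives you more than the stated lemma. The last step never uses $x\in\partial K$ or the maximality of $x'$; it needs only $x,x'\in K$ with $|x-x'|=\omega$. It then yields both $x=G_K^r(v)$ and $x'=G_K^r(-v)$.

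Combined with your diameter bound, this is exactly the form the paper actually invokes at the start of the proof of Theorem~\ref{t5.1}: for $x,y\in K$ one has $|y-x|\le\omega$, with equality precisely for antipodal pairs in direction $\frac1\omega(y-x)$. Starting from the lemma as literally stated, that form needs a small extra argument, for instance to see that both endpoints of a diameter lie on $\partial K$.

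The converse half of that equivalence also follows from your inequality. If $y=G_K^r(v)$ and $x=G_K^r(-v)$, then $\omega=\langle y-x,v\rangle\le|y-x|\le\omega$, which forces $y-x=\omega v$.
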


\vskip.3cm
\begin{lemma}\label{l3.3}
 Let $K \subset \mathbb{R}^n$ be a convex body of constant width $\omega$. Then
$$
K = \bigcap_{x \in \partial K} \overline{B^n}(x, \omega),
$$
where $\overline{B^n}(x, \omega)$ is the $n$-dimensional closed ball
of center $x$ and radius $\omega$ in $\mathbb{R}^n$.
\end{lemma}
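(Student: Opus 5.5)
We prove the two inclusions separately. Write $L := \bigcap_{x \in \partial K} \overline{B^n}(x, \omega)$.

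\emph{Inclusion $K \subseteq L$.} For $x \in \partial K$ and $y \in K$ we need $|x-y| \le \omega$. Put $d := \max_{y \in K}|x-y|$ and take $x' \in F_x$, which is possible since $K$ is compact. Lemma \ref{l3.2} gives $|x-x'| = \omega$, so $d = \omega$. Hence every $y \in K$ lies in $\overline{B^n}(x,\omega)$, and so $K \subseteq L$. An alternative route avoids Lemma \ref{l3.2}: write $u = (y-x)/|y-x|$. Then $\langle y, u\rangle \le h_K(u)$ and $\langle x, -u\rangle \le h_K(-u)$. Adding these and using Lemma \ref{l3.1}(i) gives $|y-x| = \langle y - x, u\rangle \le h_K(u)+h_K(-u) = \omega$.

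\emph{Inclusion $L \subseteq K$.} We argue by contradiction. Take $z \in L \setminus K$. Since $K$ is compact and convex, $z$ can be separated from $K$. Concretely, let $p$ be the nearest point of $K$ to $z$ and set $u := (z-p)/|z-p|$. Then $p \in \partial K$, $p = G_K^r(u)$, and $\langle z, u\rangle > h_K(u)$. Now consider the antipodal support point $q := G_K^r(-u) \in \partial K$. The hyperplanes $H(K,u)$ and $H(K,-u)$ are at distance $\omega$, so
$$\langle z - q, u\rangle = \langle z,u\rangle + h_K(-u) > h_K(u) + h_K(-u) = \omega .$$
This gives $|z-q| \ge \langle z-q,u\rangle > \omega$, hence $z \notin \overline{B^n}(q,\omega)$, which contradicts $z \in L$. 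Therefore $L \subseteq K$.

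The argument uses only that the supporting hyperplanes in opposite directions are at distance $\omega$ (Lemma \ref{l3.1}(i)) and that support points lie on $\partial K$. No step is a real obstacle. The one point that needs care is justifying that the point $q$ realizing $h_K(-u)$ lies on $\partial K$, so that the ball centred at $q$ really belongs to the intersection defining $L$. This follows from the description $\partial K = \bigcup_u (H(K,u)\cap K)$ given after Definition \ref{d2.1}, together with strict convexity.
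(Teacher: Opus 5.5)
Your proof is correct. The paper does not prove Lemma~\ref{l3.3} itself: it lists it among facts collected from \cite{LO-2007} and states it without proof, so there is no in-paper argument to compare against. You give the standard elementary proof. The inclusion $K\subseteq L$ is the statement that the diameter of $K$ is at most $\omega$. The reverse inclusion is separation followed by passing to the antipodal support point.

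Of your two routes for the first inclusion, the second one is preferable. Adding $\langle y,u\rangle\le h_K(u)$ and $\langle x,-u\rangle\le h_K(-u)$ is self-contained, whereas Lemma~\ref{l3.2} is likewise only quoted in the paper. That route also works for every $x\in K$, not just $x\in\partial K$, so you get $K=\bigcap_{x\in K}\overline{B^n}(x,\omega)$ as a by-product.

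The caution in your last paragraph is unnecessary. If $\langle q,-u\rangle=h_K(-u)$, then every point $q-tu$ with $t>0$ satisfies $\langle q-tu,-u\rangle>h_K(-u)$ and so lies outside $K$. Hence $q\in\partial K$ with no convexity assumption beyond $q\in K$. Strict convexity only makes $G_K^r(-u)$ single-valued. Likewise, for $p$ you only need $\langle p,u\rangle=h_K(u)$, which follows from the obtuse-angle characterization of the nearest point.

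Finally, your argument gives exactly the form in which the lemma is used in the proof of Theorem~\ref{t5.1}. For $y_0\notin K$ it produces an explicit $z=G_K^r(-u)\in\partial K$ with $|z-y_0|>\omega$.
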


Let $K \in \mathcal{K}^n(\omega)$ be a convex body of constant width $\omega$.
The two points $G_K^r(u)$ and $G_K^r(-u)$ are said to form a pair of {\it antipodal points} of
$K$ corresponding to the directions $u$ (or $-u$).
Define the {\it midpoint map} $\widetilde{G_K^r}: S^{n-1}\rightarrow \mathbb{R}^n$
by
\begin{equation}\label{e3.3}
\widetilde{G_K^r}(u) = \frac{1}{2} \bigl(G_K^r(u) + G_K^r(-u)\bigr), \quad u \in S^{n-1}.
\end{equation}
Set $\widetilde{\Gamma} = \widetilde{G_K^r}(S^{n-1})$. In other words,
$\widetilde{\Gamma}$ consists of the midpoints of all antipodal pairs of
$K$, taken over every direction. Since $\partial K = G_K^r(S^{n-1})$
is closed and bounded, $\widetilde{\Gamma}$ is likewise
a closed bounded hypersurface in $\mathbb{R}^n$.
This hypersurface is called the {\it Wigner caustic} of
the constant width body $K$.

Define the function $\widetilde{h}(u) = \frac{1}{2}(h_K(u) - h_K(-u))$.
Combining formulas \eqref{e3.1} and \eqref{e3.3} , we obtain
$$
G^r_{\widetilde{\Gamma}}(u) = \widetilde{G_K^r}(u)
= \frac{1}{2} \bigl(G_K^r(u) + G_K^r(-u)\bigr)
= \sum_{i=1}^{n-1} \frac{\partial \widetilde{h}(u)}{\partial x_i} e_i + \widetilde{h}(u)u
= G^r(\widetilde{h}, u).
$$
Thus $\frac{1}{2}(h_K(u) - h_K(-u)) = h_K(u) - \frac{\omega}{2}$
serves as the generalized support function of the Wigner caustic
$\widetilde{\Gamma}$ associated with $K$, and is denoted by $\widetilde{h}_K(u)$.
Define
$$
\widetilde{W_K^r}(u) := W_{{\widetilde\Gamma}}^r(u), \quad
\widetilde{\rho}_i(K, u) := \rho_i(\widetilde{\Gamma}, u), \quad
u \in S^{n-1}, \quad  i = 1, \dots, n-1,
$$
which are, respectively, the inverse Weingarten matrix and
the principal radii of curvature corresponding to the caustic.
The following identities follow directly:
\begin{equation}\label{e3.4}
\begin{aligned}
&G_{\widetilde{\Gamma}}^r(u)=G^r(\widetilde{h}, u) = G^r(\widetilde{h}, -u)=G_{\widetilde{\Gamma}}^r(-u),\\
&\widetilde{W_K^r}(u) = W_K^r(u) - \frac{\omega}{2} I,\\
&\widetilde{\rho}_i(K, u) = \rho_i(K, u) - \frac{\omega}{2}, \qquad i = 1, \dots, n-1.
\end{aligned}
\end{equation}
With these preliminaries established, we now state the general construction
theorem for $n$-dimensional bodies of constant width.

\vskip.3cm
\begin{theorem}\label{t3.4}
Let $\widetilde{h} \in C^{1,1}(S^{n-1})$, and write $\widetilde{\Gamma} = G^r(\widetilde{h}, S^{n-1})$.
Assume the following two conditions are satisfied:

(1) $\widetilde{h}(u) + \widetilde{h}(-u) = 0$ for all $u\in S^{n-1}$;

(2) all second-order partial derivatives of $\widetilde{h}$ are bounded on $S^{n-1}$.

Then there exist a positive constant $\omega$ and a constant width body $K \in \mathcal{K}^n(\omega)$
such that the support function of $K$ satisfies $h_K(u) = \widetilde{h}(u) + \frac{\omega}{2}$.
\end{theorem}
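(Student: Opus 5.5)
The plan is to define $h(u) := \widetilde{h}(u) + \frac{\omega}{2}$ for a suitably large constant $\omega>0$, check that $h$ satisfies the hypotheses of Lemma \ref{l3.1}(2), and read off the conclusion from that lemma. Since $\widetilde{h}\in C^{1,1}(S^{n-1})$ and constants are smooth, $h\in C^{1,1}(S^{n-1})$. Condition (1) gives
$$h(u)+h(-u)=\widetilde{h}(u)+\widetilde{h}(-u)+\omega=\omega$$
for all $u$, so the antipodal sum condition of Lemma \ref{l3.1}(2) holds for every choice of $\omega$.

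Next I would control the generalized inverse Weingarten matrix. Let $\Gamma=G^r(h,S^{n-1})$. By \eqref{e03.2}, at every $u$ where $h$ is twice differentiable (almost every $u$, by Rademacher, exactly as for $\widetilde h$),
$$W^r_\Gamma(u)=\operatorname{Hess}(\widetilde h(u))+\widetilde h(u)I+\tfrac{\omega}{2}I=\widetilde{W}(u)+\tfrac{\omega}{2}I,$$
where $\widetilde W(u)=\operatorname{Hess}(\widetilde h(u))+\widetilde h(u) I$ is the generalized inverse Weingarten matrix of $\widetilde\Gamma$; this mirrors \eqref{e3.4}. By condition (2) the entries of $\operatorname{Hess}(\widetilde h)$ are bounded on $S^{n-1}$, say by $M$. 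Also $\widetilde h$ is continuous on the compact sphere, so $|\widetilde h|\le M'$. Hence every eigenvalue of the symmetric matrix $\widetilde W(u)$ satisfies $|\lambda|\le (n-1)M+M'=:C$, uniformly in $u$; here I use that the operator norm is bounded by $(n-1)$ times the maximal entry. Consequently the eigenvalues of $W^r_\Gamma(u)$ are $\rho_i(\Gamma,u)=\widetilde\rho_i(u)+\frac{\omega}{2}\in[\frac{\omega}{2}-C,\frac{\omega}{2}+C]$.

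Now choose $\omega\ge 2C$; for strict inequalities one can take $\omega>2C$. Then $0\le\rho_i(\Gamma,u)\le\omega$ wherever $W^r_\Gamma(u)$ exists, so all hypotheses of Lemma \ref{l3.1}(2) hold. That lemma yields $K\in\mathcal K^n(\omega)$ with $\partial K=\Gamma$ and $h_K=h_\Gamma=h=\widetilde h+\frac{\omega}{2}$, which is the claim.

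The main obstacle is the uniform eigenvalue bound: it must hold at almost every point, with a constant independent of the local orthonormal frame. I would handle this by noting that $\operatorname{Hess}$ in the adapted frame $\{u;e_1,\dots,e_{n-1}\}$ is the covariant Hessian. Its bound is frame-independent up to the orthogonal change of frame, which preserves eigenvalues. Interpreting condition (2) as boundedness of this covariant Hessian then makes the constant $C$ global.

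A secondary point is to make sure Lemma \ref{l3.1}(2) actually needs only the almost-everywhere bounds; it does, as stated. If one wants strict convexity with a strictly positive lower bound on the curvatures, as used inside that lemma's proof, taking $\omega>2C$ gives $\rho_i\le\frac{\omega}{2}+C<\omega$, so the principal curvatures are bounded below by $1/(\frac{\omega}{2}+C)>0$.
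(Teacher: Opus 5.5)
Your proposal is correct and is essentially the paper's own argument: set $h=\widetilde h+\frac{\omega}{2}$, get $h(u)+h(-u)=\omega$ from condition (1), use condition (2) to bound the eigenvalues of $\operatorname{Hess}(\widetilde h)+\widetilde h I$ uniformly so that $\frac{\omega}{2}$ dominates them, and conclude with Lemma~\ref{l3.1}(2). Your explicit constant $C=(n-1)M+M'$, the frame-independence remark, and the observation that $\omega>2C$ gives a positive lower bound on the curvatures only make precise what the paper compresses into one line.
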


\begin{proof}
By condition (2), the entries of the symmetric matrix $W_{\widetilde{\Gamma}}^r(u)$
are bounded on $S^{n-1}$. Consequently, its eigenvalues $\rho_i(\widetilde{\Gamma}, u), i = 1, \dots, n-1,$
 are bounded over $S^{n-1}$. Choose $\omega > 0$ such that
$$
\sup_{\substack{u \in S^{n-1} \\ i=1,\dots,n-1}} \left|\rho_i(\widetilde{\Gamma}, u)\right| \leq \frac{\omega}{2}.
$$
Set $h(u) = \widetilde{h}(u) + \frac{\omega}{2}$ and $\Gamma = G^r(h, S^{n-1})$, for $u \in S^{n-1}$.
Combining condition (1) with Lemma \ref{l3.1}, the closed set enclosed by $\Gamma$
is precisely the desired convex body $K$, whose support function is $h$ and which has constant width $\omega$.
This completes the proof of Theorem \ref{t3.4}.
\end{proof}

\section{Necessary Conditions for the $n$-Dimensional Blaschke-Lebesgue Problem}\label{s4}
\setcounter{equation}{0}

Let $K, L$ be convex bodies in $\mathbb{R}^n$ and $\lambda \in [0, 1]$,
the classical Brunn-Minkowski inequality states that (see \cite{S-2014}):
$$
V^{\frac{1}{n}}\bigl(\lambda K + (1 - \lambda)L\bigr)
\geq \lambda V^{\frac{1}{n}}(K) + (1 - \lambda)V^{\frac{1}{n}}(L).
$$
Equality holds for $\lambda \in (0, 1)$ if and only if $K$ and $L$ are homothetic. Here $K + L$
stands for the Minkowski sum of $K$ and $L$, whose support function obeys the additivity rule
$$
h_{K+L}(u) = h_K(u) + h_L(u).
$$
Consequently, for any two convex bodies $K, L \in \mathcal{K}^n(\omega)$,
the convex combination $\lambda K + (1-\lambda)L$
is again a body of constant width $\omega$ for each
$\lambda \in [0, 1]$. Accordingly, we define the set
$$
\mathcal{H}^n(\omega) := \bigl\{ h_K(u) : u \in S^{n-1} \mid K \in \mathcal{K}^n(\omega) \bigr\},
$$
which forms a convex subset of the function space $C^{1,1}(S^{n-1})$.

On the other hand, formula \eqref{e2.5} shows that the volume functional $V$
is homogeneous of degree $n$ on $C^{1,1}(S^{n-1})$.
Hence the $n$-dimensional Blaschke-Lebesgue problem can be restated as
a nonlinear convex optimization problem:
minimize the $n$-homogeneous volume functional $V$ subject to the constraint
that the support function belongs to the convex set $\mathcal{H}^n(\omega)$.

From the Brunn-Minkowski inequality one deduces that, for arbitrary $K, L \in \mathcal{K}^n(\omega)$ and
$\lambda \in [0, 1]$, the functional $-V^{\frac{1}{n}}$ satisfies the subadditivity relation
$$
-V^{\frac{1}{n}}\bigl(\lambda K + (1 - \lambda)L\bigr)
\leq \lambda\bigl(-V^{\frac{1}{n}}(K)\bigr) + (1 - \lambda)\bigl(-V^{\frac{1}{n}}(L)\bigr).
$$
This shows that $-V^{\frac{1}{n}}$ is a convex functional over the convex set
$\mathcal{H}^n(\omega)$. Minimizing the volume $V$ is therefore equivalent to
maximizing the functional $-V^{\frac{1}{n}}$. By standard
properties of convex functionals, any maximizer attained on $\mathcal{H}^n(\omega)$
must lie on its boundary. The geometric property
corresponding to the boundary of $\mathcal{H}^n(\omega)$ is given in the following lemma.

\vskip.3cm
\begin{lemma}\label{l4.1}
Let $K \in \mathcal{K}^n(\omega)$. If $K$
is a minimizer for the $n$-dimensional Blaschke-Lebesgue problem, then
$$
\max \bigl\{ |\widetilde{\rho}_i(K, u)| : i = 1, \dots, n-1 \bigr\} = \frac{\omega}{2}
$$
holds almost everywhere on $S^{n-1}$, where
$
\widetilde{\rho}_i(K, \cdot)$ denote the principal radii of curvature of the Wigner caustic of $K
$.
\end{lemma}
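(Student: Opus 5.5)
We argue by contradiction, using the reformulation set up just above: the Blaschke--Lebesgue problem amounts to maximizing the convex functional $-V^{\frac1n}$ over the convex set $\mathcal{H}^n(\omega)$. By Lemma \ref{l3.1}, $\mathcal{H}^n(\omega)$ consists of the functions $h\in C^{1,1}(S^{n-1})$ with $h(u)+h(-u)=\omega$ and $0\le \rho_i\le \omega$ almost everywhere. By \eqref{e3.4}, the last condition reads $|\widetilde\rho_i(K,u)|\le \frac{\omega}{2}$. So the claimed identity says that a minimizer's support function $h_K$ saturates this pointwise constraint almost everywhere, i.e.\ it lies on the ``boundary'' of $\mathcal{H}^n(\omega)$ in a pointwise sense. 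Suppose instead that there is a set $E\subset S^{n-1}$ of positive measure on which
$$\max_i|\widetilde\rho_i(K,u)|<\frac{\omega}{2}.$$
Since $E$ may be replaced by $E\cup(-E)$ (because $\widetilde W_K^r(-u)=-\widetilde W_K^r(u)$ by Lemma \ref{l3.1}), we may take $E$ symmetric. Shrinking $E$, we may further assume there is $\delta>0$ with $\max_i|\widetilde\rho_i|\le \frac{\omega}{2}-\delta$ on $E$.

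The key step is to build a nonzero odd perturbation $\varphi\in C^{1,1}(S^{n-1})$ that is supported, at the level of its second-order quantity $\operatorname{Hess}\varphi+\varphi I$, essentially inside $E$, with that quantity bounded by $\delta$ there. Then $h_\pm=h_K\pm t\varphi$ for small $t>0$ both belong to $\mathcal{H}^n(\omega)$. Indeed, oddness of $\varphi$ preserves $h(u)+h(-u)=\omega$, and the eigenvalue bounds $|\widetilde\rho_i|\le\frac{\omega}{2}$ are preserved by Weyl's inequality; Theorem \ref{t3.4} and Lemma \ref{l3.1}(2) then guarantee that $h_\pm$ are support functions of bodies $K_\pm\in\mathcal{K}^n(\omega)$. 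Since $h_K=\frac12(h_++h_-)$, we get $K=\frac12K_++\frac12K_-$. Brunn--Minkowski then gives
$$V^{\frac1n}(K)\ge \tfrac12V^{\frac1n}(K_+)+\tfrac12V^{\frac1n}(K_-)\ge V^{\frac1n}(K),$$
where the second inequality uses minimality of $K$. Equality therefore forces $K_+$ and $K_-$ to be homothetic. Since both have the same constant width, they are translates, so $\varphi$ is the restriction of a linear function $\langle a,u\rangle$. We will have chosen $\varphi$ non-linear, for instance with $\operatorname{Hess}\varphi+\varphi I\not\equiv0$ on a positive-measure part of $E$. This is a contradiction.

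The main obstacle is constructing $\varphi$ when $E$ is merely measurable, not open. A $C^{1,1}$ function cannot in general have $\operatorname{Hess}\varphi+\varphi I$ supported in an arbitrary measurable set while staying small and nonzero. We handle this in two steps. First, by Lebesgue density together with approximate continuity of the a.e.\ defined bounded field $W_K^r$ (Lusin's theorem), we find a point $u_0$ and a small geodesic ball $B$ around it such that $\max_i|\widetilde\rho_i|\le\frac{\omega}{2}-\delta$ holds on all of $B$ up to a null set. If this fails everywhere, we pass to a quantitative version: we bound $\rho_i$ only in the averaged sense that the constraint requires, noting that $\rho_i\in[0,\omega]$ only needs to hold a.e. Second, we take $\varphi=\epsilon(\psi(u)-\psi(-u))$, where $\psi$ is a smooth bump supported in $B$ (with $B\cap -B=\emptyset$); for $\epsilon$ small the perturbation stays within the $\delta$-margin. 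Carrying out the density argument rigorously is where the real work lies. If it resists, we fall back on requiring $E$ to contain an open set, which matches the almost-everywhere phrasing of the lemma only up to this regularity issue.
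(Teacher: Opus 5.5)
Your strategy is the same as the paper's: perturb $h_K$ by $\pm$ an odd localized bump, use Lemma~\ref{l3.1} to certify that both perturbed functions are support functions of bodies in $\mathcal{K}^n(\omega)$, write $K=\tfrac12K_++\tfrac12K_-$, and get a contradiction from strict Brunn--Minkowski. Your non-homothety step (equal width forces $K_\pm$ to be translates, hence $\varphi$ linear) is a tidier version of the paper's case analysis in $\mu$.

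The genuine gap is the one you flag yourself, and your repair does not close it. A Lebesgue density point $u_0$ of $E$ only gives $|B_r(u_0)\setminus E|/|B_r(u_0)|\to 0$. It never gives $|B_r(u_0)\setminus E|=0$ for some $r>0$. Approximate continuity of $W_K^r$ at $u_0$ controls $W_K^r$ only off a set of density zero at $u_0$. Lusin's theorem gives continuity relative to a closed set, not on a ball. If $E$ is, e.g., a fat Cantor-type set, every ball meets $S^{n-1}\setminus E$ in a nonempty open set where the constraint may be saturated. There $h_K\pm\epsilon\varphi$ can violate $0\le\rho_i\le\omega$ for every $\epsilon>0$. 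The ``averaged'' fallback does not help: Lemma~\ref{l3.1}(2), and convexity itself, need $0\le\rho_i\le\omega$ pointwise almost everywhere, not on average. Falling back to ``$E$ contains an open set'' only proves a weaker statement: $\max_i|\widetilde\rho_i(K,\cdot)|$ has essential supremum $\frac{\omega}{2}$ on every open subset of $S^{n-1}$.

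The paper's proof has exactly the same hole. It simply asserts that, because $E$ has positive measure, one can pick $u_0$ in the interior of $E$.

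For $n\ge 3$ the hole cannot be patched by a cleverer $\varphi$ within your framework. Suppose $E=-E$ is closed, nowhere dense, of positive measure, with connected complement; such sets exist once $n-1\ge 2$. Then $\operatorname{Hess}\varphi+\varphi I=0$ a.e.\ off $E$ means the $1$-homogeneous extension of $\varphi$ has vanishing Hessian on a connected, dense open cone. So $\varphi$ agrees with a linear function on a dense set, hence everywhere, and the perturbation is trivial. A proof of the almost-everywhere statement would need either perturbations allowed to be nonzero where the constraint is saturated (controlled only along the saturated eigendirections), or extra regularity of the slack set.

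For $n=2$ your localization idea does go through if you replace the bump by an odd solution of $\varphi''+\varphi=f$. Here $f\in L^\infty(S^1)$ should be odd, nonzero, supported in $E$, bounded by $\delta$, and $L^2$-orthogonal to $\cos\theta$ and $\sin\theta$, so that a periodic solution exists. Then the radii of curvature of $h_K\pm\varphi$ are $\rho(K,\cdot)\pm f\in[0,\omega]$ a.e., and $\varphi$ is not linear.
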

	
\begin{proof}
 We proceed by contradiction.
Suppose that $K \in \mathcal{K}^n(\omega)$
 is a volume minimizer for the $n$-dimensional Blaschke-Lebesgue problem.
 Assume there exists a measurable set $E \subset S^{n-1}$ of positive Lebesgue measure such that
\begin{equation}\label{e4.1}
\sup_{\substack{u \in E \\ i=1,\dots,n-1}} |\widetilde{\rho}_i(K, u)| < \frac{\omega}{2}.
\end{equation}
Since $E$ has positive measure,
we can pick a point $u_0$ lying in the interior of $E$. Define
\begin{equation*}
\begin{aligned}
&S_1 = \bigl\{ u : |u - u_0| < r_1, \; u \in E \bigr\},
\\
&S_2 = \bigl\{ u : |u - u_0| < r_2, \; u \in E \bigr\},
\end{aligned}
\end{equation*}
and
$$
S_+ = \bigl\{ u : \langle u, u_0 \rangle \geq 0, \; u \in S^{n-1} \bigr\},
$$
where the real numbers $r_1, r_2$ satisfy $0 < r_1 < r_2 < \sqrt{2}$. Then
$$
S_1 \subsetneq S_2 \subsetneq S_+.
$$
We construct a smooth cutoff function $c(u)$ on $S^{n-1}$ by
$$
c(u) =
\begin{cases}
1, & u \in S_1, \\[4pt]
G(|u - u_0|), & u \in S_2 \setminus S_1, \\[4pt]
0, & u \in S_+ \setminus S_2, \\[4pt]
-c(-u), & u \in S^{n-1} \setminus S_+,
\end{cases}
$$
with
$$
G(x) = \frac{\int_{x}^{r_2} g(t)\,dt}{\int_{r_1}^{r_2} g(x)\,dx}, \qquad g(x) = e^{\frac{1}{(x-r_1)(x-r_2)}}.
$$
Let $S_2^- = \{-u : u \in S_2\}$.
From the definition of the cutoff function $c(u)$,
one readily checks that
$$
c(u) = 0, \qquad u \in S^{n-1} \setminus \bigl(S_2 \cup S_2^-\bigr).
$$
Define
\begin{equation}\label{e4.2}
h_1(u) = h_K(u) + \varepsilon c(u), \qquad h_2(u) = h_K(u) - \varepsilon c(u),
\end{equation}
where $\varepsilon > 0$ is a positive constant.
Let $\Gamma_1, \Gamma_2$ denote the closed bounded hypersurfaces
with generalized support functions $h_1, h_2$, respectively.
Since the cutoff function and all its derivatives of arbitrary order are bounded,
for $u \in S_2 \subset E$, in view of equations \eqref{e4.1} and \eqref{e4.2},
there exists $\varepsilon > 0$ sufficiently small such that
$$
\left|\rho_i(\Gamma_j, u) - \frac{\omega}{2}\right| \leq \frac{\omega}{2},
\quad i = 1, \dots, n-1, \quad j = 1, 2.
$$
Moreover,
$$
h_j(u) + h_j(-u) = \omega, \quad u \in S^{n-1}, \quad j = 1, 2.
$$
By Lemma \ref{l3.1}, there exist constant width bodies $K_1, K_2 \in \mathcal{K}^n(\omega)$
satisfying $\partial K_1 = \Gamma_1$, $\partial K_2 = \Gamma_2$,
whose support functions are given by
$$
h_{K_1} = h_1 = h_K + \varepsilon c, \qquad h_{K_2} = h_2 = h_K - \varepsilon c.
$$
Consequently, $2K = K_1 + K_2.$

We claim that $K_1$ and $K_2$ cannot be homothetic.
Suppose, for contradiction, that $K_1$ and $K_2$ are homothetic.
Then there exist a constant vector $a \in \mathbb{R}^n$
and a real constant $\mu \in \mathbb{R}$ such that $K_1 = a + \mu K_2$, or
equivalently,
$$
h_1(u) = \langle a, u \rangle + \mu h_2(u), \qquad u \in S^{n-1}.
$$
For $u \in S^{n-1} \setminus (S_2 \cup S_2^-)$, we have $c(u) = 0$,
hence $h_1(u) = h_2(u) = h_K(u)$. This yields
\begin{equation}\label{e4.3}
(1 - \mu)h_K(u) = \langle a, u \rangle.
\end{equation}

If $\mu \neq 1$, then
$
h_K(u) = \left\langle \frac{1}{1-\mu} a, u \right\rangle.
$
Consequently,
$$
0 < \omega = h_K(u) + h_K(-u)
= \left\langle \frac{1}{1-\mu} a, u \right\rangle + \left\langle \frac{1}{1-\mu} a, -u \right\rangle
= 0,
$$
which leads to a contradiction.

If $\mu = 1$, then from \eqref{e4.3} we obtain $a = 0$. Hence
$h_1(u) = h_2(u)$ for all $u \in S^{n-1}$. Nevertheless, by
construction of $c(u)$, we have $c(u) = 1$ for $u \in S_1$,
which implies $h_1(u) \neq h_2(u)$, again a contradiction.

Therefore, $K_1$ and $K_2$ are not homothetic.
Combining this fact with the Brunn-Minkowski inequality, we obtain
$$
2V^{\frac{1}{n}}(K) > V^{\frac{1}{n}}(K_1) + V^{\frac{1}{n}}(K_2),
$$
so that at least one of $K_1, K_2$ has strictly smaller volume than $K$.
This contradicts the hypothesis that $K$ is a volume minimizer for the
Blaschke-Lebesgue problem.
Therefore, no measurable set $E\subset S^{n-1}$ of positive Lebesgue measure
can satisfy
$$
\sup_{\substack{u \in E \\ i=1,\dots,n-1}} |\widetilde{\rho}_i(K, u)| < \frac{\omega}{2}.
$$
Together with Lemma \ref{l3.1} and equation \eqref{e3.4}, we conclude that
$$
\max \bigl\{ |\widetilde{\rho}_i(K, u)| : i = 1, \dots, n-1 \bigr\} = \frac{\omega}{2}
$$
holds almost everywhere on $S^{n-1}$.
\end{proof}

If $K \in \mathcal{K}^n(\omega)$
is a minimizer for the $n$-dimensional Blaschke-Lebesgue problem,
then by Lemma \ref{l4.1} together with equations \eqref{e3.2} and \eqref{e3.4},
for almost every $u\in S^{n-1}$
there exists at least one principal direction $\xi_{i_0} \in T_u S^{n-1}$,
with $i_0 \in \{1, \dots, n-1\}$,
such that the corresponding principal radii of curvature satisfy
$$
\{\rho_{i_0}(K, u), \rho_{i_0}(K, -u)\} = \{0, \omega\}.
$$
When $\rho_{i_0}(K, u) = 0$, the inverse Weingarten matrix $W_K^r(u)$
becomes degenerate, and accordingly $G_K^r(u)$ is a non-regular point of $\partial K$.
From this observation we derive the geometric property of
minimizers for the Blaschke-Lebesgue problem,
formulated in the following theorem.

\vskip.3cm
\begin{theorem}\label{t4.2}
(Necessary conditions for the n-dimensional Blaschke-Lebesgue problem)
Let $K \in \mathcal{K}^n(\omega)$
be a volume minimizer for the $n$-dimensional Blaschke-Lebesgue problem.
Then, for almost every pair of antipodal points of $K$, one point is
non-regular; at the other point, the minimum principal curvatures equals $\frac{1}{\omega}$.
\end{theorem}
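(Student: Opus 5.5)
The plan is to derive Theorem \ref{t4.2} directly from Lemma \ref{l4.1}, combined with the pointwise relations \eqref{e3.2} and \eqref{e3.4} between the principal radii at antipodal points and those of the Wigner caustic.

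\textbf{Step 1: a full-measure set of good directions.} Let $N\subset S^{n-1}$ be the null set off which $h_K$ is twice differentiable. Let $N'$ be the null set off which the conclusion of Lemma \ref{l4.1} holds. Put $U=S^{n-1}\setminus\bigl(N\cup N'\cup(-N)\cup(-N')\bigr)$. The antipodal map preserves measure, so $U$ has full measure and $U=-U$. For every $u\in U$, the matrices $W_K^r(u)$ and $W_K^r(-u)$ exist, and by the proof of Lemma \ref{l3.1},
$$W_K^r(u)+W_K^r(-u)=\omega I.$$
Hence the two matrices are simultaneously diagonalizable with common eigenvectors $\xi_1,\dots,\xi_{n-1}$, and $\rho_i(K,u)+\rho_i(K,-u)=\omega$. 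This is \eqref{e3.2}.

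\textbf{Step 2: the extremal index.} By \eqref{e3.4} and Lemma \ref{l4.1}, for $u\in U$ there is an index $i_0$ with $|\rho_{i_0}(K,u)-\frac{\omega}{2}|=\frac{\omega}{2}$. Therefore $\rho_{i_0}(K,u)\in\{0,\omega\}$, and by Step 1,
$$\{\rho_{i_0}(K,u),\rho_{i_0}(K,-u)\}=\{0,\omega\}.$$
Since the pair is unordered, we may relabel so that $\rho_{i_0}(K,v)=0$ and $\rho_{i_0}(K,-v)=\omega$, where $v\in\{u,-u\}$.

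\textbf{Step 3: the point $G_K^r(v)$ is non-regular.} The eigenvalue $0$ means $\det W_K^r(v)=0$. So the inverse Weingarten transformation at $v$ lacks full rank, which is exactly the definition of a non-regular point.

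\textbf{Step 4: the minimum curvature at $G_K^r(-v)$.} By Lemma \ref{l3.1}(ii), every principal radius satisfies $\rho_i(K,-v)\le\omega$, and $\rho_{i_0}(K,-v)=\omega$ attains this bound. If $W_K^r(-v)$ is invertible, the principal curvatures are $1/\rho_i(K,-v)\ge 1/\omega$, so their minimum is $1/\omega$.

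If $W_K^r(-v)$ is singular, I would interpret the principal curvatures as $1/\rho_i$ with values in $(0,\infty]$. Only the finite ones can be minimal, and the smallest is still $1/\rho_{i_0}=1/\omega$. I would state this convention explicitly in the write-up. Finally, the pairs $\{G_K^r(u),G_K^r(-u)\}$ with $u\in U$ exhaust almost every antipodal pair, because $S^{n-1}\setminus U$ is a null set.

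\textbf{Main obstacle.} The delicate step is Step 4 when both points of a pair are degenerate. Then "minimum principal curvature" needs the convention of Step 4, or an argument that such pairs form a null set; I would adopt the convention rather than prove the latter. A second point needing care is that the exceptional sets must be symmetrized, as done in Step 1. Otherwise "almost every $u$" does not translate into "almost every antipodal pair."
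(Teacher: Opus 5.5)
Your proposal is correct and follows the paper's own argument. The paper derives Theorem~\ref{t4.2} in the paragraph just before it, combining Lemma~\ref{l4.1} with \eqref{e3.2} and \eqref{e3.4} to get $\{\rho_{i_0}(K,u),\rho_{i_0}(K,-u)\}=\{0,\omega\}$ for almost every $u$ and reading off non-regularity from the zero radius; your symmetrization of the exceptional null sets and your explicit convention for the minimum principal curvature when both points of a pair are degenerate are careful additions the paper leaves implicit.
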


\vskip.3cm
\begin{definition}\label{d4.3}
Let $K\in \mathcal{K}^n(\omega)$, we say that $K$ possesses {\it property} P,
if it satisfies the necessary condition given in Theorem \ref{t4.2},
that is, for almost every pair of antipodal points
of $K$, one point is non-regular, and at the other point the minimum
principal curvatures equals $\frac{1}{\omega}$.
Denote by $\mathcal{K}_P^n(\omega)$ the collection of all bodies
of constant width $\omega$ that satisfy Property P.
\end{definition}

\noindent {\it Remark 2.}~
Theorem \ref{t4.2} is a higher-dimensional generalization of Theorem A.
On the one hand, by the definition of non-regular points,
we may classify non-regular points according to
the rank of the inverse Weingarten matrix.
For instance, let $K \in \mathcal{K}^n(\omega)$
be a minimizer for the $n$-dimensional Blaschke-Lebesgue problem.
If $\partial K$ is non-regular at $G_K^r(u)$,
then $\operatorname{rank}(W_K^r(u)) < n-1$. When $n \geq 2$ and
$\operatorname{rank}(W_K^r(u)) = 0$, the tangent space $T_{G_K^r(u)} \partial K$
is the zero-dimensional subspace of $T_u S^{n-1}$, the local degree of freedom of
$G_K^r(u)$ equals $0$, so $G_K^r(u)$ is a vertex of $\partial K$.
When $n \geq 3$ and $\operatorname{rank}(W_K^r(u)) = 1$, $T_{G_K^r(u)} \partial K$
is a one-dimensional subspace of $T_u S^{n-1}$, the local degree of freedom of
$G_K^r(u)$ is $1$, so $G_K^r(u)$ lies on a one-dimensional edge of $\partial K$.

On the other hand, $\partial K$ consists of regular points almost everywhere,
meaning that the set of all non-regular points forms a null set in $\partial K$.
Therefore, Theorem \ref{t4.2} can be equivalently states that
$\partial K$ is almost everywhere composed of regular points whose
minimum principal curvature equals $\frac{1}{\omega}$,
and the corresponding antipodal points constitute precisely
the non-regular part of $\partial K$. In particular, for $n = 3$,
the non-regular points of $\partial K$ are either vertices or
lie on edges, and the minimum
principal curvature at each regular point is $\frac{1}{\omega}$.
This coincides with the result of Theorem A.

Using Theorem \ref{t4.2}, we can give a new proof of the 2-dimensional Blaschke-Lebesgue theorem:

\vskip.3cm
\begin{corollary}\label{c4.4}(The Blaschke-Lebesgue theorem)
Among all planar bodies of constant width $\omega > 0$,
the Reuleaux triangle has minimal area.
\end{corollary}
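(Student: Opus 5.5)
The plan is to use Theorem \ref{t4.2} in dimension $n=2$. There the matrix $W_K^r(u)$ is the scalar $\rho(K,u)=h_K''(u)+h_K(u)$, where $u=(\cos\theta,\sin\theta)$ and derivatives are taken in $\theta$. Let $K\in\mathcal{K}^2(\omega)$ be an area minimizer; its existence follows from the Blaschke selection theorem and the continuity of area on the compact class $\mathcal{K}^2(\omega)$. Since there is only one principal radius, Lemma \ref{l4.1} together with \eqref{e3.2} and \eqref{e3.4} gives $\rho(K,u)\in\{0,\omega\}$ for almost every $u\in S^1$. So the radius-of-curvature function of $\partial K$ takes only the values $0$ and $\omega$. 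Geometrically, $\partial K$ consists of vertices, where the support point stays fixed while $u$ sweeps an arc, and circular arcs of radius $\omega$. Moreover, $\rho(K,u)=0$ exactly when $\rho(K,-u)=\omega$, so each vertex arc is antipodal to an arc of radius $\omega$ centred at that vertex. The main obstacle is to show that such a body is a Reuleaux polygon: the set $\{u:\rho(K,u)=\omega\}$ is a priori only measurable and could in principle be a Cantor-like set of positive measure.

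For the finiteness step I will use Firey's results \cite{F-1960}, as the paper announces. Firey shows that a planar constant-width body whose radius of curvature is a.e.\ in $\{0,\omega\}$ is a limit of, and in the finite case equal to, a Reuleaux polygon. If I cannot quote a finite-structure statement directly, I will argue by approximation instead. Any measurable set $A\subset S^1$ with $A\cap(-A)=\emptyset$, $A\cup(-A)=S^1$ a.e., and $\int_A e^{i\theta}\,d\theta=0$ (the closure condition $\int_{S^1}\rho\, e^{i\theta}\,d\theta=0$) can be approximated in measure by finite unions of arcs with the same properties. The corresponding bodies are Reuleaux polygons, and they converge to $K$ in the Hausdorff metric, so their areas converge to the area of $K$. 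It therefore suffices to show that every Reuleaux polygon of width $\omega$ has area at least that of the Reuleaux triangle. This also removes the need to prove regularity of $K$ itself.

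For the comparison among Reuleaux polygons, I will use the area formula \eqref{e2.5} in the form $A(K)=\frac12\int_{S^1}(h^2-h'^2)\,d\theta=\frac12\int h\rho\,d\theta$, or equivalently the classical decomposition. A Reuleaux polygon with arcs of radius $\omega$ subtending angles $\alpha_1,\dots,\alpha_m$, where $\sum\alpha_j=\pi$, has area equal to $\frac{\omega^2}{2}\sum\alpha_j$ minus the areas of the triangles cut out between consecutive vertices, that is, $\frac{\omega^2\pi}{2}$ minus a sum of terms of the form $\frac{\omega^2}{2}\sin\alpha_j$ plus overlap corrections. I expect the cleanest route to be the known reduction step: any Reuleaux polygon with $m\ge5$ sides can be deformed, by moving one vertex along an arc and keeping constant width, to one with fewer sides. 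Along this one-parameter family the area is concave, by Brunn--Minkowski applied to the Minkowski combinations, so the minimum is attained at an endpoint, where one side collapses. Iterating reduces to $m=3$, and among triangles only the regular one has width $\omega$, which gives the Reuleaux triangle.

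I expect the hardest part to be the middle step, namely passing from the a.e.\ condition of Theorem \ref{t4.2} to a finite polygonal structure and controlling the limit. If approximation proves delicate, I will instead apply the Brunn--Minkowski concavity argument of Lemma \ref{l4.1} directly to the general measurable set $A$. Swapping small antipodal pieces of $A$ and $-A$ preserves constant width, and this should show that any minimizer must have $A$ equal to three arcs.
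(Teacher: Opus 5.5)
The step that fails is your comparison among Reuleaux polygons. You claim that the area is concave along the family obtained by moving a vertex along an arc, ``by Brunn--Minkowski applied to the Minkowski combinations''. That family is not a Minkowski segment, and cannot be reparametrized into one. Suppose $K_0,K_1$ are Reuleaux polygons of width $\omega$ that are not translates. Then $\rho_0\neq\rho_1$ on a set of positive measure, where one equals $0$ and the other equals $\omega$. There $(1-t)K_0+tK_1$ has radius of curvature $t\omega$ or $(1-t)\omega$, so it is not a Reuleaux polygon for $0<t<1$.

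Put differently, Reuleaux polygons are extreme points of $\mathcal{K}^2(\omega)$ modulo translations. Brunn--Minkowski, or concavity of the area in the support function, can never rank two extreme points. Along the segment joining them it only says that the minimum is attained at one of the two ends, not at which one. Whatever concavity holds along the classical vertex-moving deformation must be proved by an explicit trigonometric computation, e.g.\ from the area formula $|P|+\frac{\pi\omega^2}{2}-\frac{\omega^2}{2}\sum_j\sin\alpha_j$ with $P$ the convex hull of the vertices. You have not supplied that computation.

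Your fallback has the same defect. Swapping antipodal pieces of $A$ and $-A$ (which must also preserve $\int_A e^{i\theta}\,d\theta=0$) produces another extreme point. To show a minimizer has three arcs you would need first-variation (switching-function) information, not concavity. The direct repair is what the paper does: it invokes \cite{F-1960} precisely for the fact that the Reuleaux triangle has least area among Reuleaux polygons of equal width. It does not use \cite{F-1960} for a structure theorem about bodies with $\rho\in\{0,\omega\}$ a.e.

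With that step replaced by the citation, your argument works, and in one respect it is better than the paper's. The paper passes from $\rho\in\{0,\omega\}$ a.e.\ directly to ``precisely the Reuleaux polygons'', and that inference is false. Take the set $A$ of a Reuleaux triangle. Exchange infinitely many small subarcs $I_j\subset A$, accumulating at an interior point of one of its arcs, with their antipodes $I_j+\pi$. Then restore $\int_A e^{i\theta}\,d\theta=0$ by slightly moving two arc endpoints. The result is a body of constant width $\omega$ with $\rho\in\{0,\omega\}$ a.e.\ and infinitely many vertices.

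Your approximation of $A$ in measure by finite unions of arcs, with the same antipodal and closure properties and with convergence of the areas, closes exactly this hole. Your Blaschke selection argument also supplies the existence of a minimizer, which the paper takes for granted.

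Note, however, that once you approximate, Lemma~\ref{l4.1} is no longer needed. Reuleaux polygons are dense in the whole of $\mathcal{K}^2(\omega)$, so your proof is really the classical density-plus-polygon-comparison argument. That is an advantage: the paper's proof of Lemma~\ref{l4.1} picks $u_0$ ``in the interior of $E$'', and a set of positive measure need not have interior points.
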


\begin{proof}
Let $K \in \mathcal{K}^2(\omega)$ be a planar body of constant
width with minimal area. Its boundary $\partial K$ is a closed convex
curve in $\mathbb{R}^2$.
Every point on this curve has a unique tangent direction,
which yields a uniquely defined curvature.
By Theorem \ref{t4.2}, the radius of curvature $\rho$ at each boundary point
can take only the values $0$ or $\omega$:
\begin{itemize}
\item[\upshape(i)] when $\rho = 0$, the curvature $\kappa$ satisfies $\kappa=\infty$,
and the corresponding boundary point is a non-regular cusp;
\item[\upshape(ii)] when $\rho = \omega$, the curvature $\kappa$ satisfies $\kappa=\frac{1}{\omega}$,
and the corresponding boundary point lies on a smooth circular arc of radius $\omega$.
\end{itemize}
To summarize the above discussion,
$\partial K$ consists of only two components:
circular arc segments with constant curvature $\frac{1}{\omega}$,
and non-regular cusps with infinite curvature.
The antipodal point of every point on these circular arcs
is exactly a non-regular cusp.
Moreover, there exists no curve segment on $\partial K$
whose curvature is continuous but non-constant curvature.
The planar constant-width bodies
satisfying these geometric features are precisely the
Reuleaux polygons. Applying the result due to Firey
in reference \cite{F-1960}, which asserts that the Reuleaux triangle attains
the minimal area among all Reuleaux polygons of equal width,
we conclude that, for fixed width, the Reuleaux triangle
has the minimal area among all planar bodies of constant width.
\end{proof}

\section{Dimension-Raising Construction of Lachand-Robert and Oudet}\label{s5}
\setcounter{equation}{0}

We first recall the {\it dimension-raising construction} proposed
by Lachand-Robert and Oudet in \cite{LO-2007},
which produces an $n$-dimensional body of constant width from any
$(n-1)$-dimensional body of constant width. The construction is stated as follows.

Let $H \subset \mathbb{R}^n$ be an affine hyperplane,
$E_+^n$ and $E_-^n$ the two open half-spaces separated by $H$,
and $K_0 \subset H$ be an $(n-1)$-dimensional body of constant width $\omega > 0$.
Lachand-Robert and Oudet constructed the $n$-dimensional body
$K^n$ of constant width $\omega$ by
\begin{equation}\label{e5.1}
\begin{aligned}
K^n \cap \overline{E_+^n} = K_+^n := \overline{E_+^n} \cap \bigcap_{x \in K_0} \overline{B^n}(x, \omega),
\end{aligned}
\end{equation}
\begin{equation}\label{e5.2}
\begin{aligned}
K^n \cap E_-^n = K_-^n := E_-^n \cap \bigcap_{x \in K_+^n} \overline{B^n}(x, \omega).
\end{aligned}
\end{equation}

Let $K$ be an $m$-dimensional body of constant width $\omega$,
and write $K^m=K$ for the initial convex body. Iterating the
above Lachand-Robert-Oudet dimension-raising construction
yields a sequence of convex bodies $\{K^n : n \geq m\}$ starting from $K$,
which we denote by $\mathcal{LO}(K)$. Clearly, for every $K^n \in \mathcal{LO}(K)$,
we have $K^n \in \mathcal{K}^n(\omega)$.

\vskip.3cm
\begin{theorem}\label{t5.1}
Let $K \in \mathcal{K}^m(\omega)$. If $K^n \in \mathcal{LO}(K)$, $n \geq m$,
then $K^n \in \mathcal{K}_P^n(\omega)$, that is,
$K^n$ is a body of constant width satisfying Property P.
\end{theorem}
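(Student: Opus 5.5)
We argue by induction on $n \ge m$, tracking how the boundary of $K^n$ decomposes under the Lachand-Robert--Oudet construction. We must check Property P for almost every antipodal pair $\bigl(G_{K^n}^r(u), G_{K^n}^r(-u)\bigr)$. Equivalently, in the language of Lemma \ref{l4.1} and \eqref{e3.4}, for a.e.\ $u$ some principal radius satisfies $\{\rho_{i_0}(K^n,u),\rho_{i_0}(K^n,-u)\}=\{0,\omega\}$. For the base case $n=m$ we take Property P as a hypothesis on the seed, or else start from $m=1$, the segment, where the condition is vacuous. The honest statement likely needs this, since for $n=m$ the claim is only as strong as the assumption on $K$. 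The substance is the inductive step: if $K_0=K^{n-1}\subset H$ has constant width $\omega$, then $K^n$ defined by \eqref{e5.1}--\eqref{e5.2} lies in $\mathcal{K}_P^n(\omega)$.

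For the inductive step we first describe $\partial K_+^n$. By \eqref{e5.1}, $K_+^n$ is an intersection of balls of radius $\omega$ centred at points of $K_0$, and only centres on $\partial K_0$ are active. By Lemma \ref{l3.2}, each boundary point $y$ of $K_+^n$ off $H$ is at distance $\omega$ from its farthest point $x\in\partial K_0$. The outward normal is $u=(y-x)/\omega$, and $G^r_{K^n}(-u)=x$. So for a.e.\ direction $u$ pointing into $E_+^n$, the antipodal point $G^r_{K^n}(-u)$ lies in $K_0$, an $(n-1)$-dimensional set. Near such $u$, the point $y$ is locally the sphere $\partial B^n(x,\omega)$ swept as $x$ moves along $\partial K_0$. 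The direction normal to $H$-projection contributes radius exactly $\omega$: along the great circle through $u$ and the normal to $H$ at $x$, the boundary is an arc of radius $\omega$ about the fixed centre $x$. Hence $\rho_{i_0}(K^n,u)=\omega$, which by \eqref{e3.2} forces $\rho_{i_0}(K^n,-u)=0$. Therefore $x$ is non-regular and the maximum radius at $y$ is $\omega$, i.e.\ the minimum principal curvature is $1/\omega$. This settles a.e.\ direction in the open upper hemisphere of normals, and, by symmetry of the pair, the corresponding lower ones.

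It remains to handle directions $u$ for which both $G^r(u)$ and $G^r(-u)$ lie in $K^n\cap\overline{E_-^n}$, or lie on $\partial K_0$ itself. By \eqref{e5.2}, $K_-^n$ is bounded by spheres of radius $\omega$ centred at points of $K_+^n$. Using Lemma \ref{l3.3} and Lemma \ref{l3.2} again, a point $z\in\partial K_-^n$ has its farthest point $y\in\partial K_+^n$. We then split according to where $y$ lies. If $y$ lies on the smooth part of $\partial K_+^n$ coming from a sphere about $x\in\partial K_0$, the pair $(y,z)$ is handled by the previous paragraph applied with roles swapped. If $y$ lies on the "edge" set $\partial K_+^n\cap H=\partial K_0$ or on non-regular strata, then $y$ itself is non-regular in $K^n$. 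Indeed, its normal cone has dimension at least $2$, because it contains normals of $K_0$ within $H$ rotated toward the $E_-^n$ side. Its antipode $z$ is then a sphere piece of radius $\omega$ centred at $y$, giving $\rho=\omega$ in the direction of rotation.

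The main obstacle is this last bookkeeping. We must verify that the directions $u$ not covered by these cases form a null set, and that along each covered direction the \emph{same} principal direction gives $\omega$ at one end. The second part is automatic from \eqref{e3.2}, since $W^r(u)+W^r(-u)=\omega I$ share eigenvectors. The first part requires care: the set of normals at points of $\partial K_0$ viewed in $K^n$ has positive measure, so those $u$ must genuinely be counted in the "$y$ non-regular, $z$ spherical" case. We would argue this through the Gauss-image decomposition $S^{n-1}=U_+\cup U_0\cup U_-$, according to whether $G^r(u)$ lies in $E_+^n$, in $H$, or in $E_-^n$, and show that each piece falls into one of the cases above up to measure zero.
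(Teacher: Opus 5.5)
Your remark on the base case is correct. For $n=m$ the statement fails (take a ball, $m\ge2$), and the paper's own argument only concludes for $n>m$. The induction is also idle, since the step never uses Property P of $K^{n-1}$. Your skeleton is the paper's: an arc of radius $\omega$ about the farthest point $x\in\partial K_0$ combined with \eqref{e3.2}, then points of $\partial K_-^n$ handled via their antipodes in $\partial K_+^n$. However, two steps are not established.

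The first is the arc itself. You claim the centre stays fixed at $x$ along the great circle through $u$ and $e_n$ (the unit normal of $H$ pointing into $E_+^n$). You support this only by the envelope heuristic, yet it is the one real geometric input. The paper proves it as a lemma using Lemma \ref{l3.3}: if $x,y_1,y_2\in\partial K$ with $|y_1-x|=|y_2-x|=\omega$, the minor arc about $x$ joining $y_1,y_2$ lies in $\partial K$. Apply it with $y_2=x-\omega v$, the $K_0$-antipode of $x$, where $v$ is an outer normal of $K_0$ at $x$ within $H$. This gives an arc ending at $y$, which is enough: $G^r_{K^n}\equiv x$ on the corresponding arc of directions ending at $-u$, so $W^r_{K^n}(-u)\tau=0$ wherever it exists.

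The second and more serious gap is the passage from boundary points to almost every direction. The sentence ``the outward normal is $u=(y-x)/\omega$'' presumes $y$ has a unique normal. Non-smooth points of $\partial K_+^n$ can carry sets of normals of positive measure, so your claim that a.e. upward $u$ has $G^r_{K^n}(-u)\in K_0$ is false. For a counterexample, start from a disc: the construction gives the Reuleaux triangle rotated about its axis. Every $u$ strictly within $30^\circ$ of $e_n$ has $G^r(u)$ at the apex and $G^r(-u)$ on the lower spherical cap, inside $E_-^n$. Your third paragraph reaches such pairs through ``non-regular strata''. But it is organised by boundary points, and the conversion to a full-measure set of directions---the actual content of the theorem---is left open.

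The missing observation is this. $G^r_{K^n}$ is constant on the geodesically convex set of unit normals at a boundary point. So if $G^r(u)$ has a second unit normal $u'$ and $W^r(u)$ exists (write $W^r=W^r_{K^n}$), differentiating along the arc from $u$ to $u'$ gives $W^r(u)\tau=0$. Then $W^r(u)+W^r(-u)=\omega I$ gives radius $\omega$ at $-u$.

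With this the accounting is immediate, and the arc is not even needed. For a.e. $u$ with $\langle u,e_n\rangle>0$, put $p=G^r(u)$ and $q=G^r(-u)=p-\omega u$.
\begin{itemize}
\item If $p\in E_+^n$, then by \eqref{e5.1} some $x\in K_0$ has $|p-x|=\omega$, and $(p-x)/\omega$ is an outer normal at $p$. Either it differs from $u$, so $W^r(u)$ is singular. Or $q=x\in\partial K_0$, which carries both the normal $-u$ and a horizontal normal $v$ (as $x-\omega v\in K^n$ and $\operatorname{diam}K^n=\omega$), so $W^r(-u)$ is singular.
\item If $p\notin E_+^n$, then $q\in E_-^n$, so by \eqref{e5.2} some $x\in K_+^n$ has $|q-x|=\omega$. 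Either $(q-x)/\omega\neq-u$, so $W^r(-u)$ is singular. Or $p=x\in K_+^n\cap H=K_0$, which carries the normal $u$ and a horizontal one, so $W^r(u)$ is singular.
\end{itemize}
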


\begin{proof}
By Lemma \ref{l3.2}, for any $x, y \in K \in \mathcal{K}^n(\omega)$,
we have $|y - x| \leq \omega$, with equality if and only if
$x, y$ are a pair of antipodal points in the direction $\frac{1}{\omega}(y - x)$.

We claim the following: for every $K \in \mathcal{K}^n(\omega)$ and for any
$
x, y_1, y_2 \in \partial K$,
if
$y_1 \neq y_2$
and
$|y_1 - x| = |y_2 - x| = \omega$,
then the minor arc $s(y_1, y_2)$ centered at $x$ with radius
$\omega$ and endpoints $y_1, y_2$, given by
\begin{equation}\label{e5.3}
s(y_1, y_2) = \left\{ x + \omega u \mid u
= \lambda u_1 + \mu u_2 \in S^{n-1},\ \lambda, \mu \geq 0
 \right\},
\end{equation}
satisfies $s(y_1, y_2) \subset \partial K$, where
$u_1 = \frac{1}{\omega}(y_1 - x),\ u_2 = \frac{1}{\omega}(y_2 - x)$.

Indeed, suppose $s(y_1, y_2) \not\subset \partial K$.
Then there exists at least one point $y_0 = x + \omega u_0 \in s(y_1, y_2)$
such that $y_0 \notin \partial K$, where $u_0 = \lambda_0 u_1 + \mu_0 u_2$
 with $\lambda_0, \mu_0 > 0$. Since the diameter of the constant width body
 $K$ equals $\omega$, it follows that $y_0 \notin K$.
 By Lemma \ref{l3.3}, there exists $z\in K$ such that $|z-y_0|>\omega$.
 Set $a=z-x$, it follows from $x,y_1,y_2,z\in K$ that
$$
|z - x| \leq \omega, \qquad |z - y_1| \leq \omega, \qquad |z - y_2| \leq \omega,
$$
that is,
$$a^2 \leq \omega^2, \qquad (a - \omega u_1)^2 \leq \omega^2, \qquad (a - \omega u_2)^2 \leq \omega^2.$$
From these we obtain
$$
2\omega\langle a, u_1\rangle \geq a^2, \qquad 2\omega\langle a, u_2\rangle \geq a^2.
$$
Moreover, since
$
\omega^2 < (z - y_0)^2 = (a - \omega u_0)^2,
$
we obtain
$$
a^2 > 2\omega\langle a, \lambda_0 u_1\rangle + 2\omega\langle a, \mu_0 u_2\rangle \geq (\lambda_0 + \mu_0)a^2.
$$
Hence
$\lambda_0 + \mu_0 = |\lambda_0 u_1| + |\mu_0 u_2| < 1$.
However, $|u_0| = |\lambda_0 u_1 + \mu_0 u_2| = 1$,
which yields a contradiction.
Therefore, $s(y_1, y_2) \subset \partial K$.

If $y_1 \in \partial K^n_+$
and
$y_1 \notin K^{n-1}$,
then by \eqref{e5.1}, there exists
$x \in \partial K^{n-1} \subset \partial K^n$ such that $|y_1 - x| = \omega$.
Let $y_2 \in \partial K^{n-1}$ be an antipodal point of $ x$ on
$K^{n-1}$, so that $|y_2 - x| = \omega$.

By \eqref{e5.3}, the circular arc $s(y_1, y_2)$ centered at $x$
with radius $\omega$ and endpoints $y_1, y_2$ is contained in
$\partial K^n$. Since the curvature of $s(y_1, y_2)$ is constantly
$\frac{1}{\omega}$, by Lemma \ref{l3.1} and equation \eqref{e3.2},
whenever $y_1 = G_{K^n}(u_1) \in \partial K^n_+$ is a regular point,
it follows from $y_1 \in s(y_1, y_2) \subset \partial K^n$
that the maximum principal radius of curvature of $\partial K^n$ at $y_1$
satisfies $\rho_{i_0}(u_1) = \omega$, and the minimum principal curvature
 equals $\frac{1}{\omega}$. Moreover, at the antipodal point $G_{K^n}(-u_1)$,
 the principal radius of curvature in the corresponding principal direction satisfies
 $\rho_{i_0}(-u_1) = 0$, so this point is non-regular. Conversely,
 if $y_1 \in \partial K^n_+$ is non-regular, then the minimum
 principal curvature at its antipodal point is $\frac{1}{\omega}$.
 By the dimension-raising construction, the antipodal point of any point of $\partial K_-^n$
 belongs to $\partial K^n_+$. Therefore, $K^n$ satisfies Property P,
 that is, for every $K^n \in \mathcal{LO}(K)$ with $n > m$, we have
 $K^n \in \mathcal{K}_P^n(\omega)$.
\end{proof}

In particular, when $K$ is a line segment of length $\omega > 0$,
Theorem \ref{t5.1} implies that if $K^n \in \mathcal{LO}(K)$ with $n \geq 1$,
then necessarily $K^n \in \mathcal{K}_P^n(\omega)$. Here, $K^1 = K$
 is the unique one-dimensional body of constant width in $\mathcal{K}^1(\omega)$;
 $K^2$ is the Reuleaux triangle, the minimizer for the
 planar Blaschke-Lebesgue problem; and $K^3$ is the Meissner body,
 which is conjectured to be the minimizer for the
 three-dimensional Blaschke-Lebesgue problem.
 Based on Theorems \ref{t4.2} and \ref{t5.1}, we formulate the following conjecture.

{\bf Conjecture}\quad
 Let $\omega>0$ be a real number,
 let $K$ be a line segment of length $\omega$,
 and let $\mathcal{LO}(K)$ denote the sequence of
 constant width convex bodies obtained from $K$
 via the dimension-raising construction.
 Then $\mathcal{LO}(K)$ yields the family of minimizers for the
 Blaschke-Lebesgue problem. that is, whenever $K^n \in \mathcal{LO}(K)$,
 $K^n$ is a minimizer for the corresponding
 $n$-dimensional Blaschke-Lebesgue problem.

{\bf Funding information}
This work is supported by the Excellent Young Talents Fund Program of
Higher Education Institutions of Anhui Province(No. gxyqZD2020022) and
the Horizontal Scientific Research Funds
in Huaibei Normal University (No. 2025340609000006).

{\bf Conflict of Interest}
The corresponding author states that there is no conflict of interest.

{\bf Data availability statement}
Data sharing is not applicable to this article as no new data were created
 or analyzed in this study.

\end{document}